\documentclass[11pt]{article}

\usepackage[margin=1in]{geometry}
\usepackage{amsthm,amsmath}
\usepackage{latexsym,amssymb}
\usepackage{graphicx,float,color,fancybox,shapepar,setspace}
\usepackage{subfigure}
\usepackage{pgf,tikz}
\usepackage[affil-it]{authblk}
\usepackage{indentfirst}
\usepackage[section]{placeins}
\usepackage{enumitem}
\usepackage{lineno}
\usepackage{hyperref}

\hypersetup{
	colorlinks=true,
	linkcolor=blue,
	filecolor=blue,
	urlcolor=blue,
	citecolor=cyan,
}

\usetikzlibrary{arrows}
\makeatletter
\renewenvironment{proof}[1][\proofname]{\par
	\normalfont \topsep6\p@\@plus6\p@\relax
	\trivlist
	\item[\hskip\labelsep
	\itshape
	#1\@addpunct{.}]\ignorespaces
}{%
	\hfill $\square$
	\endtrivlist\@endpefalse
}
\makeatother

\newtheorem{theorem}{Theorem}[section]

\newtheorem{claim}{Claim}[section]

\newtheorem{conjecture}[theorem]{Conjecture}

\begin{document}
	
	\title{Counterexamples to the Balogh-Linz-Patk\'os Conjecture}
	
	\author[1]{Jia-Bao Yang\footnote{Email: \texttt{jbyang1215@nju.edu.cn}}}
	
	\author[2,3]{Leilei Zhang\footnote{Corresponding author. Email: \texttt{mathdzhang@163.com}}}
	
	\date{}
	\affil[1]{\small School of Mathematics, Nanjing University, Nanjing, 210093,  P.R. China }
	\affil[2]{\footnotesize School of Mathematics and Statistics, Central China Normal University, Wuhan 430079, China}
	\affil[3]{\small Faculty of Environment and Information Sciences, Yokohama National University, Yokohama 240-8501, Japan}
	\maketitle

	\begin{abstract}
		A set system $\mathcal{F}$ is called $t$-{\it intersecting} if $|A\cap B|\ge t$ for every pair of sets $A,B\in \mathcal{F}$.  A set system $\mathcal{F}$ is $k$-{\it Sperner} if it does not contain a chain of length $k+1$. Balogh, Linz and Patk\'os (Combinatorial Theory, 2023) conjectured an extremal result for $t$-intersecting $k$-Sperner families when $n+t$ is odd.  In this note we give an explicit construction that is $t$-intersecting and $k$-Sperner, and whose size exceeds that of the conjectured fixed-star construction for infinitely many values of $n$. Consequently, we disprove the Balogh-Linz-Patk\'os conjecture for all $t\ge 2$ and $k\ge 2$ satisfying $k(t-1)\ge t+1$.
	\end{abstract}
	
	{{\bf Keywords:} The Balogh-Linz-Patk\'os Conjecture; $t$-intersecting; $k$-Sperner.}

	\section{Introduction}
	
	For a positive integer $n$, we write $[n]=\{1,2,\dots,n\}$ and $2^{[n]}$ for the power set of $[n]$.
	For a set $V$ and an integer $r$, let $\binom{V}{r}$ denote the family of all $r$-subsets of $V$.  
	A family of sets $\mathcal{F}\subseteq 2^{[n]}$ is \emph{$t$-intersecting} if $|A\cap B|\ge t$ for all $A,B\in \mathcal{F}$. 
	In the special case $t=1$, such a family is simply called intersecting.  
	The family $\mathcal{F}$ is \emph{$k$-Sperner} if it contains no chain of length $k+1$; 
	equivalently, there do not exist sets $A_0\subset A_1\subset\cdots\subset A_k$ with all $A_i\in\mathcal{F}$.  
	Thus a $1$-Sperner family is precisely a Sperner family, or antichain.
	
	The classical theorem of Sperner \cite{Sperner1928} determines the maximum possible size of an antichain in $2^{[n]}$ as follows.
	
	\begin{theorem}[Sperner \cite{Sperner1928}] 
		Let $\mathcal{F}\subseteq 2^{[n]}$ be an antichain.  Then
		$$|\mathcal{F}|\le \binom{n}{\lfloor n/2\rfloor}.$$
		Moreover, equality holds only if $\mathcal{F}$ is one of the largest layers in the Boolean lattice $2^{[n]}$.
	\end{theorem}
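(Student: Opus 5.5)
We prove Sperner's theorem with the LYM inequality, obtained by counting maximal chains, and then read off the equality case from the same double count.

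Consider the $n!$ maximal chains $\emptyset=C_0\subset C_1\subset\cdots\subset C_n=[n]$ in $2^{[n]}$. These correspond bijectively to permutations $\sigma$ of $[n]$ via $C_i=\{\sigma(1),\dots,\sigma(i)\}$. A fixed set $A$ with $|A|=a$ lies on exactly $a!(n-a)!$ maximal chains. Since $\mathcal{F}$ is an antichain, every maximal chain contains at most one member of $\mathcal{F}$. Double counting pairs (chain, member of $\mathcal{F}$ on it) therefore gives
$$\sum_{A\in\mathcal{F}} |A|!\,(n-|A|)!\le n!, \qquad\text{that is,}\qquad \sum_{A\in\mathcal{F}}\binom{n}{|A|}^{-1}\le 1 .$$
Because $\binom{n}{a}\le\binom{n}{\lfloor n/2\rfloor}$ for every $a$, each summand is at least $\binom{n}{\lfloor n/2\rfloor}^{-1}$. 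Hence $|\mathcal{F}|\le\binom{n}{\lfloor n/2\rfloor}$.

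For the equality case, suppose $|\mathcal{F}|=\binom{n}{\lfloor n/2\rfloor}$. Then both inequalities above must be tight.

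First, every member of $\mathcal{F}$ must have a size $a$ maximizing $\binom{n}{a}$. So $|A|=n/2$ for all $A\in\mathcal{F}$ when $n$ is even, and $|A|\in\{\lfloor n/2\rfloor,\lceil n/2\rceil\}$ when $n$ is odd. This settles the even case, since the only largest layer is $\binom{[n]}{n/2}$ and $\mathcal{F}$ has its full size.

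Second, the LYM sum must equal $1$, so every maximal chain meets $\mathcal{F}$ exactly once. The main obstacle is the odd case, $n=2m+1$, where we must rule out a genuine mixture of the two middle layers. Suppose $\mathcal{F}$ contains some $B$ with $|B|=m$ and misses some $m$-set. We claim $\mathcal{F}$ then contains every $m$-set.

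To prove the claim, we move between adjacent $m$-sets. Let $B,B'$ be $m$-sets with $|B\cap B'|=m-1$, and suppose $B\in\mathcal{F}$ and $B'\notin\mathcal{F}$. Put $D=B\cup B'$, so $|D|=m+1$. Since $B\subset D$ and $\mathcal{F}$ is an antichain, $D\notin\mathcal{F}$. Take a maximal chain passing through $B'$ and $D$. It cannot contain a member of $\mathcal{F}$ of size $m$, because $B'$ is its only $m$-set. It cannot contain one of size $m+1$, because $D$ is its only $(m+1)$-set. Since all members of $\mathcal{F}$ have size $m$ or $m+1$, this chain misses $\mathcal{F}$, contradicting the fact that every maximal chain meets $\mathcal{F}$.

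So membership in $\mathcal{F}$ is constant along adjacent $m$-sets. The graph on $\binom{[n]}{m}$ in which two sets are adjacent when they share $m-1$ elements is connected, since one can swap one element at a time. Hence if $\mathcal{F}$ contains one $m$-set, it contains all of them, and then $\mathcal{F}=\binom{[n]}{m}$ by the size count. Otherwise $\mathcal{F}$ contains no $m$-set, and $\mathcal{F}=\binom{[n]}{m+1}$. In either case $\mathcal{F}$ is one of the largest layers.
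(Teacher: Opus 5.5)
The paper does not prove this statement. It quotes Sperner's theorem from Sperner's 1928 paper as background, so there is no in-paper argument to compare with. Your proof is the standard Lubell (LYM) argument, and it is correct and complete.

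The double count gives $\sum_{A\in\mathcal{F}}\binom{n}{|A|}^{-1}\le 1$. At equality, strict unimodality of $a\mapsto\binom{n}{a}$ puts every member in a middle layer. Tightness of the chain count then forces every maximal chain to meet $\mathcal{F}$ exactly once.

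Your odd-case step is sound. If $B\in\mathcal{F}$ and $B'\notin\mathcal{F}$ are $m$-sets with $|B\cap B'|=m-1$, then $D=B\cup B'\notin\mathcal{F}$ because $B\subset D$. So a maximal chain through $B'\subset D$ avoids $\mathcal{F}$. Hence membership in $\mathcal{F}$ is constant along edges of the connected Johnson graph on $\binom{[n]}{m}$, and the size count finishes the job. For $n=1$ there are no adjacent pairs, so this step is vacuous.

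Compared with Sperner's original route, yours is more economical. Sperner takes the lowest layer $\mathcal{F}_r$ of $\mathcal{F}$ with $r$ below the middle and replaces it by $\nabla_{r+1}(\mathcal{F}_r)$. That shadow is at least as large, and the result is still an antichain; iterating from both ends pushes $\mathcal{F}$ to the middle. Your approach gets the stronger LYM inequality in one step. It also reads off the equality characterization from the same double count, at the small cost of the connectivity argument in the odd case.

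One cosmetic point concerns the sentence ``Suppose $\mathcal{F}$ contains some $B$ with $|B|=m$ and misses some $m$-set. We claim $\mathcal{F}$ then contains every $m$-set.'' As written, the supposition contradicts its own conclusion. It is cleaner to state directly that membership in $\mathcal{F}$ is constant on $\binom{[n]}{m}$.
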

	
	Later, Sperner's theorem was extended to families with no long chains by Erd\H{o}s \cite{Erdos1945}.
	Another natural extension of Sperner's theorem, due to Milner \cite{Milner1968}, imposes an intersection condition in addition to the antichain condition.
	
	\begin{theorem}[Milner \cite{Milner1968}] 
		If $\mathcal{F}\subseteq 2^{[n]}$ is a $t$-intersecting antichain, then
		$$|\mathcal{F}|\le \binom{n}{\lfloor (n+t+1)/2\rfloor}.$$
	\end{theorem}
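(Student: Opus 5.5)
The plan is a \emph{pushing to the middle} argument. First, move every set whose size is below $m:=\lfloor (n+t+1)/2\rfloor$ upward. Then move every set whose size is above $m$ downward. Each move must not decrease the size of the family and must preserve both the antichain property and the $t$-intersecting property. At the end $\mathcal{F}$ lies inside $\binom{[n]}{m}$, which gives $|\mathcal{F}|\le\binom{n}{m}$. For a layer $\mathcal{F}_s=\mathcal{F}\cap\binom{[n]}{s}$ write $\nabla\mathcal{F}_s$ for its upper shadow in $\binom{[n]}{s+1}$ and $\partial\mathcal{F}_s$ for its lower shadow in $\binom{[n]}{s-1}$.

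\textbf{Downward step.} Suppose all sets of $\mathcal{F}$ already have size at least $m$, and let $l>m$ be the largest size that occurs. Replace $\mathcal{F}_l$ by $\partial\mathcal{F}_l$.
\begin{itemize}
\item \emph{Size.} Since $l>m\ge n/2$, the normalized matching (LYM) property gives $|\partial\mathcal{F}_l|\ge|\mathcal{F}_l|$.
\item \emph{Antichain.} The new sets are not comparable to the remaining sets, and none of them was already in $\mathcal{F}$, both because $\mathcal{F}$ was an antichain. So the size really grows by $|\partial\mathcal{F}_l|-|\mathcal{F}_l|\ge 0$.
\item \emph{$t$-intersecting, by counting alone.} Take $A\setminus\{x\}$ with $|A|=l\ge m+1$ and any other member $C$, where $|C|\ge m$ or $C$ is itself a shadow set of size $l-1\ge m$. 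Then
$$|(A\setminus\{x\})\cap C|\ge (l-1)+|C|-n\ge 2m-n\ge t,$$
because $2m\ge n+t$. The same computation covers two shadow sets.
\end{itemize}
Iterating, all sets end up in layer $m$.

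\textbf{Upward step.} Let $s<m$ be the smallest size in $\mathcal{F}$, and replace $\mathcal{F}_s$ by $\nabla\mathcal{F}_s$.
\begin{itemize}
\item \emph{$t$-intersecting.} This is automatic, since supersets of $t$-intersecting sets remain $t$-intersecting with everything.
\item \emph{Antichain.} This is preserved as before.
\end{itemize}
What remains is the inequality $|\nabla\mathcal{F}_s|\ge|\mathcal{F}_s|$. For $s<n/2$ it is again normalized matching.

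\textbf{Main obstacle: the range $n/2\le s<m$.} Here plain LYM goes the wrong way, and the $t$-intersecting property has to be used. Pass to complements: $\mathcal{G}=\{[n]\setminus A: A\in\mathcal{F}_s\}$ is $(n-s)$-uniform with $|G\cup G'|\le n-t$ for all $G,G'\in\mathcal{G}$. Moreover $|\nabla\mathcal{F}_s|=|\partial\mathcal{G}|$. I would first try to derive the needed bound $|\partial\mathcal{G}|\ge|\mathcal{G}|$ from Katona's $t$-intersecting shadow theorem, which states $|\partial\mathcal{H}|\ge|\mathcal{H}|$ for $t$-intersecting uniform $\mathcal{H}$. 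Alternatively I would prove it directly by a shifting/compression argument, reducing to an initial segment in the squashed order where the count is explicit. The range condition $s<m$, i.e.\ $2s\le n+t-1$, is exactly what should make this inequality hold. This is where I expect the real work to lie, and I would invoke Katona's theorem rather than reprove it.
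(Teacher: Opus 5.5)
The paper gives no proof of this statement. Milner's theorem is only quoted as background, so your argument has to stand on its own, and it does. The up-then-down pushing scheme is a standard proof. Every step you actually verify is correct: the LYM size bounds when $l>n/2$ or $s<n/2$, the preservation of the antichain property, and the $t$-intersection property (automatic for supersets, and by counting once all sets have size at least $m$).

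The step you flag as the main obstacle is in fact a one-line check. For $G=[n]\setminus A$ and $G'=[n]\setminus A'$ with $A,A'\in\mathcal{F}_s$ distinct,
$$|G\cap G'|=n-|A\cup A'|=n-2s+|A\cap A'|\ge n-2s+t\ge 1,$$
and the last inequality is exactly your range condition $2s\le n+t-1$. So $\mathcal{G}$ is intersecting. The intersecting case of Katona's theorem then gives $|\nabla\mathcal{F}_s|=|\partial\mathcal{G}|\ge|\mathcal{G}|=|\mathcal{F}_s|$.

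Be precise about what you cite. For $t$-intersecting $k$-uniform families, Katona's theorem is normally stated for the $(k-t)$-shadow, $|\partial_{k-t}\mathcal{H}|\ge|\mathcal{H}|$. The immediate-shadow inequality you quote is its $t=1$ case, and that case is all you need here.

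This argument covers every $s<m$ at once. The separate LYM case $s<n/2$ is therefore unnecessary, and so is the shifting alternative you mention. Similarly, the downward phase can be replaced by a single application of the LYM inequality. Once every set has size at least $m\ge n/2$, we have $\binom{n}{|A|}\le\binom{n}{m}$ for each member, so
$$|\mathcal{F}|/\binom{n}{m}\le\sum_{A\in\mathcal{F}}1/\binom{n}{|A|}\le 1.$$
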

	
	Along a different line of research, Frankl \cite{Frankl1990} determined the maximum size of an intersecting $k$-Sperner family.  Different proofs were later given by Gerbner \cite{Gerbner2013} and by Gerbner, Methuku, and Tompkins \cite{Gerbner2017}.
	
	\begin{theorem}[Frankl \cite{Frankl1990}]
		Let $\mathcal{F}\subseteq 2^{[n]}$ be an intersecting $k$-Sperner family.  Then
		$$
		|\mathcal{F}| \le
		\begin{cases}
			\displaystyle \sum_{i=(n+1)/2}^{(n+1)/2+k-1}\binom{n}{i}, & \text{if $n$ is odd,} \\
			\displaystyle \binom{n-1}{n/2-1}+\sum_{i=n/2+1}^{n/2+k-1}\binom{n}{i}+\binom{n-1}{n/2+k}, & \text{if $n$ is even.}
		\end{cases}
		$$
		Furthermore, the equality case was characterized.
	\end{theorem}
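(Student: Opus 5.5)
One way to prove Frankl's theorem is the cyclic permutation method of Katona, weighted to handle $k$ layers at once. For a $k$-Sperner family the natural quantity is the profile $f_i=|\mathcal{F}\cap\binom{[n]}{i}|$. The aim is an LYM-type inequality whose weights reward the middle layers above $n/2$.

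\textbf{Step 1: reduce to sets of size at least $\lceil n/2\rceil$.} If $A\in\mathcal{F}$ with $|A|<n/2$, I would push it upward. Since $\mathcal{F}$ is intersecting, $\mathcal{F}$ and the family of complements $\overline{\mathcal{F}}=\{\overline A: A\in\mathcal{F}\}$ are disjoint. I would then use a shifting-up and matching argument: replace the lowest layer by its upper shadow within $\binom{[n]}{i+1}$, choosing sets that are not complements of members. This keeps the family intersecting and keeps it free of $(k+1)$-chains. By the normalized matching property, $|\nabla \mathcal{F}_i|\ge |\mathcal{F}_i|$ for $i<n/2$.

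\textbf{Step 2: bound on cyclic chains.} The cleaner route, following Gerbner, is to fix a cyclic permutation $\sigma$ and consider the intervals of $\sigma$ that belong to $\mathcal{F}$. For intervals of lengths greater than $n/2$, an intersecting family forces no pair of complementary intervals. Combined with the absence of $(k+1)$-chains among intervals with a common endpoint, this lets one count at most $n$ intervals per length-window of $k$ consecutive lengths. More precisely, I would prove that for every cyclic order,
\[
\sum_{A\in\mathcal{F},\,A\text{ interval of }\sigma} w(|A|)\le n\sum_{i\in I} 1,
\]
where $I$ is the set of $k$ lengths $\lceil (n+1)/2\rceil,\dots,\lceil(n+1)/2\rceil+k-1$. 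Averaging over $\sigma$ gives $\sum_i f_i/\binom{n}{i}\le k$, restricted to $i\ge (n+1)/2$. Since $\binom ni$ is decreasing in that range, this yields $|\mathcal{F}|\le\sum_{i\in I}\binom ni$.

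\textbf{Step 3: $n$ even.} Here the layer $n/2$ is only half-usable, since it contains no complementary pairs. I would give intervals of length $n/2$ weight $\binom{n-1}{n/2-1}$ and show that on a cycle at most $n/2$ of them can be present. A displaced unit then appears at layer $n/2+k$ with the bound $\binom{n-1}{n/2+k}$. This corresponds to a Katona-type bound of $n/2$ intersecting half-intervals, combined with the chain condition. The result is a mixed LYM inequality, and optimizing it gives the stated even bound.

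\textbf{Main obstacle.} The hardest step is the per-cycle inequality in Step 2 and Step 3, where the intersecting and $k$-chain conditions interact. The difficulty is showing that, among intervals of small length (below $n/2$) together with their complements, the total weight cannot exceed the $k$ top-window allowance. I would first try to prove it by induction on $k$: remove the minimal elements of $\mathcal{F}\cap\{\text{intervals}\}$, which form an intersecting antichain of intervals and are handled by Milner's theorem in its cyclic form. This reduces to the $(k-1)$-Sperner case, where the window shifts up by one length each time. The equality characterization would then be read off from when each per-cycle inequality is tight.
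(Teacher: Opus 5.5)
The paper does not prove this statement. It only quotes it from Frankl, mentioning later proofs by Gerbner and by Gerbner, Methuku and Tompkins. Your proposal therefore has to stand on its own, and it has a genuine gap: the one place where the intersecting and $k$-Sperner conditions must interact is never handled.

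The argument for Step~1 fails. Replacing the lowest layer $\mathcal F_i$ by $\nabla\mathcal F_i$ does preserve both properties. Supersets of members of an intersecting family automatically intersect everything, so avoiding complements is beside the point. But the size changes by $|\nabla\mathcal F_i\setminus\mathcal F_{i+1}|-|\mathcal F_i|$, while normalized matching controls only $|\nabla\mathcal F_i|$. Take odd $n\ge 3$, $k=2$, and $\mathcal F=\{A\subseteq[n]: 1\in A,\ |A|\in\{(n-1)/2,(n+1)/2\}\}$. This family is intersecting and $2$-Sperner, yet $\nabla\mathcal F_{(n-1)/2}=\mathcal F_{(n+1)/2}$, so your replacement deletes $\binom{n-1}{(n-3)/2}$ sets and adds none. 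Here the loss can be recovered by replacing each bottom set $A$ by $\overline A$, which gives the full layer $\binom{[n]}{(n+1)/2}$. In general, however, complementation creates new long chains, and controlling them is exactly the work that is skipped.

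Step~2 hides the same issue. With unit weights and right-hand side $n\sum_{i\in I}1=kn$, your per-cycle inequality is just the cyclic form of Erd\H{o}s' theorem: a $k$-Sperner family of arcs has at most $kn$ members. It uses nothing about intersection and averages to the LYM inequality $\sum_{A\in\mathcal F}1/\binom{n}{|A|}\le k$. For odd $n\ge 3$ and $k\ge 2$ this only gives the sum of the $k$ largest binomial coefficients, which is strictly more than the claimed bound. The phrase ``restricted to $i\ge (n+1)/2$'' silently assumes Step~1.

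What you would need is a weighted per-cycle lemma. For example: $\sum_{A\in\mathcal A}\binom{n}{|A|}\le n\sum_{i\in I}\binom{n}{i}$ for every intersecting $k$-Sperner family $\mathcal A$ of arcs (other than $[n]$) of a cyclic order, with the analogous right-hand side for even $n$. This averages to the theorem, since a fixed $i$-set is an arc in a fraction $n/\binom{n}{i}$ of cyclic orders.

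Your induction on $k$ does not give such a lemma. Bounding the minimal arcs $\mathcal M$ by cyclic Milner and $\mathcal A\setminus\mathcal M$ by the $(k-1)$ case yields $n\bigl(\binom{n}{(n+1)/2}+\sum_{i=(n+1)/2}^{(n+1)/2+k-2}\binom{n}{i}\bigr)$. This exceeds the target by $n\bigl(\binom{n}{(n+1)/2}-\binom{n}{(n+1)/2+k-1}\bigr)>0$ for $k\ge 2$. The claimed upward shift of the window is also false, since $\mathcal A\setminus\mathcal M$ may still contain short arcs; for instance $\mathcal M=\{\{x\}\}$ together with the arcs of length $2$ through $x$.

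Step~3 is likewise only asserted. Katona's bound of $n/2$ pairwise intersecting arcs of length $n/2$ is fine. But the trade-off producing the term $\binom{n-1}{n/2+k}$ is precisely the unproved interaction.

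The missing idea is an argument showing that a member $A$ below the middle costs at least as much in the higher layers as it contributes. Such an $A$ excludes $\overline A$ and every arc disjoint from it, and the chain condition then limits what can sit above it. Without this, neither the bound nor the equality characterization follows.
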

	
	A natural common generalization of Milner's theorem and Frankl's theorem asks for the largest possible size of a $t$-intersecting and $k$-Sperner family. 
	Frankl \cite{Frankl2021} formulated conjectures for this problem and proved them in several ranges.  
	The predicted extremal construction depends on the parity of $n+t$. 
	If $n+t$ is even, the conjectured answer has a particularly simple form: one takes the $k$ consecutive layers beginning at level $(n+t)/2$.
	
	\begin{conjecture}[Frankl \cite{Frankl2021}]\label{conj}
		Suppose that $n+t$ is even and $n>t$.  If $\mathcal{F}\subseteq 2^{[n]}$ is a $t$-intersecting $k$-Sperner family, then
		$$
		|\mathcal{F}|\le \sum_{i=0}^{k-1}\binom{n}{(n+t)/2+i}.
		$$
	\end{conjecture}
	
	An easy observation shows that Conjecture \ref{conj} is clearly sharp, as shown by the family
	$$\bigcup_{i=0}^{k-1}\binom{[n]}{(n+t)/2+i}.$$
	
	Frankl proved Conjecture \ref{conj} when $t\ge n-O(\sqrt n)$.  
	Balogh, Linz, and Patk\'os \cite{Balogh2023} recently proved Conjecture \ref{conj} for every fixed $t$ and all sufficiently large $n$.
	
	\begin{theorem}[Balogh, Linz and Patk\'os \cite{Balogh2023}]\label{thm:Balogh2023}
		Let $t$ and $k$ be positive integers.  Suppose that $n+t$ is even, $t\le n$, and $n$ is sufficiently large.  If $\mathcal{F}\subseteq 2^{[n]}$ is a $t$-intersecting $k$-Sperner family, then
		$$|\mathcal{F}|\le \sum_{i=0}^{k-1}\binom{n}{(n+t)/2+i}.$$
	\end{theorem}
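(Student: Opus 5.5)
The statement just given is Theorem~\ref{thm:Balogh2023}, a cited result of Balogh, Linz and Patk\'os; here is how I would attempt to prove it.

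\textbf{Approach.} I would use a chain-counting (LYM-type) argument, first reducing the family to the middle range by a shifting/compression step and then using Katona's cyclic-permutation method. Write $m=(n+t)/2$.

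\textbf{Step 1: remove sets of size below $m$.} For every $A\in\mathcal{F}$ we have $|A|\ge t$. If $\mathcal{F}$ contains a set of size less than $m$, then by the $t$-intersecting condition every other member must be large, so that $|A|+|B|\ge n+t$ roughly. I would make this precise by replacing sets of size $m-j$ with suitable supersets or shadows while preserving both the $t$-intersecting and the $k$-Sperner properties. The tool would be a Kruskal--Katona/Frankl-type comparison: a $t$-intersecting family has small $(m-j)$-layer relative to the complementary upper shadow it blocks. This mirrors Frankl's even case and Milner's argument.

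\textbf{Step 2: count chains.} Once all sets have size at least $m$, I would count pairs (maximal chain, member of $\mathcal{F}$ on it). Each maximal chain meets at most $k$ members. The weight $1/\binom{n}{|A|}$ is largest for sets near level $m$, so the natural LYM inequality $\sum_A 1/\binom{n}{|A|}\le k$ yields $|\mathcal{F}|\le\sum_{i<k}\binom{n}{m+i}$ directly, because the layers $m,\dots,m+k-1$ are the $k$ largest layers available above $m$.

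\textbf{Step 3: handle what survives for large $n$.} Step 1 is the main obstacle. A $t$-intersecting family may contain a few small sets, for instance a fixed $t$-set together with everything containing it, and there LYM fails. For $n$ large I would use a stability/removal argument. If $\mathcal{F}$ has $\Omega$-many sets below level $m$, then a Katona-circle count shows the total weight is bounded away from the extremal value by a factor like $1-c/\sqrt n$. A small exceptional part, by contrast, costs more in the upper layers than it gains. Balancing these two effects via binomial ratio estimates $\binom{n}{m-j}/\binom{n}{m+k-1+j}$ is where "$n$ sufficiently large" is needed.
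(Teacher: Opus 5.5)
The paper gives no proof of this statement; it is quoted from Balogh, Linz and Patk\'os. Your outline therefore has to stand on its own, and it does not, because the entire difficulty is postponed.

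Step 2 is fine. Once every member has size at least $m=(n+t)/2$, the $t$-intersecting condition is automatic ($|A\cap B|\ge 2m-n=t$), and LYM for $k$-Sperner families gives the bound. But the reduction in Step 1 is never carried out, and its opening claim is false: a $t$-intersecting family can contain pairs with $|A|+|B|<n+t$ (any $t$-star), as you concede in Step 3. The replacement you hint at also fails. Let $a<m$ be the smallest size occurring and $\mathcal{F}_a=\mathcal{F}\cap\binom{[n]}{a}$. Then $(\mathcal{F}\setminus\mathcal{F}_a)\cup\nabla_{a+1}(\mathcal{F}_a)$ is still $t$-intersecting and $k$-Sperner. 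For $k\ge 2$, however, it need not be any larger, because $\mathcal{F}_{a+1}$ may already contain all of $\nabla_{a+1}(\mathcal{F}_a)$. The disjointness that makes this move work in Milner's antichain setting is gone.

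Step 3 then substitutes assertions for the missing argument: a ``Katona-circle count'' losing a factor like $1-c/\sqrt n$, and ``a small exceptional part costs more than it gains''. No lemma is stated, and these assertions are essentially the theorem itself. Plain chain counting cannot deliver them. The bound LYM alone yields is the sum of the $k$ layers closest to $n/2$, which in general reach below level $m$, so the $t$-intersection property has to enter the counting.

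Moreover, the cost of low sets is not only in the upper layers. A member of size $m-j$ excludes, through the intersection condition, the sets of layers $m,\dots,m+j-1$ meeting it in fewer than $t$ elements. For instance, the full $t$-star on level $m-1$ with centre $T$ forces every $m$-set of $\mathcal{F}$ to contain $T$.

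The present paper is a warning that such gain-versus-cost heuristics cannot be waved through. In the odd case, weighing the low-layer gain against the top-layer cost only for the natural candidate (the star) produced a false conjecture. What is missing is a quantitative inequality showing that, for large $n$, the members below level $m$ never compensate for what they force out of $\mathcal{F}$ through the intersection and chain conditions.
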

	
	The expected extremal structure is more delicate when $n+t$ is odd.  
	If $n+t$ is odd, throughout this paper, we let
	$$q=\frac{n+t-1}{2}.$$
	Fix a $t$-set $T\subseteq[n]$, define the $q$-uniform $t$-star as follows:
	$$\mathcal{S}_T=\left\{S\in\binom{[n]}{q}:T\subseteq S\right\}.$$
	For a family $\mathcal{F}\subseteq\binom{[n]}{r}$ and an integer $s\ge r$, we denote the $s$-th upper shadow of $\mathcal{F}$ by
	$$
	\nabla_s(\mathcal{F})=\left\{A\in\binom{[n]}{s}:\text{ there exists }F\in\mathcal{F}\text{ with }F\subseteq A\right\}.
	$$
	The fixed-star candidate can then be written as
	\begin{equation}\label{eq1}
		\mathcal{B}_T(t,k)
		=\mathcal{S}_T
		\cup\bigcup_{i=1}^{k-1}\binom{[n]}{q+i}
		\cup\left(\binom{[n]}{q+k}\setminus \nabla_{q+k}(\mathcal{S}_T)\right).
	\end{equation}
	Although the family itself depends on $T$, its size does not.  Indeed,
	\begin{equation}\label{eq2}
		|\mathcal{B}_T(t,k)|
		=\binom{n-t}{q-t}
		+\sum_{i=1}^{k}\binom{n}{q+i}
		-\binom{n-t}{q+k-t}.
	\end{equation}
	
	Balogh, Linz and Patk\'os \cite{Balogh2023} conjectured that, for fixed $t$ and $k$, and for all sufficiently large $n$ with $n+t$ odd, every $t$-intersecting $k$-Sperner family has size at most $|\mathcal{B}_T(t,k)|$.
	
	\begin{conjecture}[Balogh, Linz and Patk\'os \cite{Balogh2023}]\label{conj:oddcase}
		There exists a positive integer $n_0=n_0(k,t)$ such that if $n+t$ is odd, $n>n_0$, and $\mathcal{F}\subseteq 2^{[n]}$ is a $t$-intersecting $k$-Sperner family, then
		$$
		|\mathcal{F}|\le\binom{n-t}{q-t}+\sum_{i=1}^{k}\binom{n}{q+i}-\binom{n-t}{q+k-t},
		$$
		where $q=(n+t-1)/2$.
	\end{conjecture}
	
	Now we are ready to state our main result.
	
	\begin{theorem}\label{thm1}
		Let $t\ge 2$ and $k\ge 2$ be integers satisfying $k(t-1)\ge t+1$.  
		For every integer $m\ge k$, let $n=t+2m+1$ and $q=(n+t-1)/2$.
		Then, for every fixed $t$-set $T\subseteq[n]$, there exists a $t$-intersecting $k$-Sperner family $\mathcal{F}\subseteq 2^{[n]}$ such that $|\mathcal{F}|>|\mathcal{B}_T(t,k)|$.
	\end{theorem}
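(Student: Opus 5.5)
The plan is to keep the full middle layers $\binom{[n]}{q+1},\dots,\binom{[n]}{q+k-1}$ of (\ref{eq1}) and replace only the bottom star $\mathcal{S}_T$. The natural candidate is the Frankl-type family
$$\mathcal{A}=\left\{A\in\binom{[n]}{q}: |A\cap R|\ge t+1\right\},$$
where $R\subseteq[n]$ is a fixed $(t+2)$-set (for instance $R\supseteq T$; sizes do not depend on this choice). Set
$$\mathcal{F}=\mathcal{A}\cup\bigcup_{i=1}^{k-1}\binom{[n]}{q+i}\cup\left(\binom{[n]}{q+k}\setminus\nabla_{q+k}(\mathcal{A})\right).$$

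\emph{Step 1 (structure).} All members of $\mathcal{F}$ lie in the $k+1$ layers $q,\dots,q+k$. A chain of length $k+1$ must therefore contain one set from each layer, in particular some $A\in\mathcal{A}$ and a superset in layer $q+k$. Such a superset lies in $\nabla_{q+k}(\mathcal{A})$ and was removed, so $\mathcal{F}$ is $k$-Sperner.

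For $t$-intersection, recall $n=2q-t+1$.
\begin{itemize}
\item Two members of $\mathcal{A}$ meet in at least $(t+1)+(t+1)-(t+2)=t$ elements of $R$.
\item A $q$-set and a $(q+j)$-set with $j\ge1$ meet in at least $2q+j-n\ge t$ elements.
\item Any two sets of size at least $q+1$ meet in at least $t+1$ elements.
\end{itemize}
Hence $\mathcal{F}$ is $t$-intersecting.

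\emph{Step 2 (counting).} Since $q+k\ge q\ge t+1$, a $(q+k)$-set $B$ lies in $\nabla_{q+k}(\mathcal{A})$ if and only if $|B\cap R|\ge t+1$. Write $a_j=\binom{2m-1}{j}$, noting $n-t-2=2m-1$ and $q-t=m$. Then
$$|\mathcal{A}|=(t+2)a_{m-1}+a_{m-2},\qquad |\nabla_{q+k}(\mathcal{A})|=(t+2)a_{m+k-1}+a_{m+k-2}.$$
On the star side, apply Pascal twice: $\binom{2m+1}{j}=a_j+2a_{j-1}+a_{j-2}$. By (\ref{eq2}), the middle layers cancel in $|\mathcal{F}|-|\mathcal{B}_T(t,k)|$. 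Using $a_m=a_{m-1}$, the difference should simplify to
$$(t-1)\bigl(a_{m-1}-a_{m+k-1}\bigr)-\bigl(a_{m+k-1}-a_{m+k}\bigr).$$

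\emph{Step 3 (the inequality; main obstacle).} It remains to show this quantity is positive for all $m\ge k$ whenever $k(t-1)\ge t+1$. This must hold exactly, not just asymptotically: as $m$ grows both brackets are of order $a_{m-1}/m$, so the leading terms genuinely compete.

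I would telescope the first bracket:
$$a_{m-1}-a_{m+k-1}=\sum_{j=m}^{m+k-2}\bigl(a_j-a_{j+1}\bigr),$$
using $a_{m-1}=a_m$, which gives $k-1$ consecutive differences $d_j=a_j-a_{j+1}$. The second bracket is the single difference $d_{m+k-1}$. The inequality then reads
$$(t-1)\sum_{j=m}^{m+k-2} d_j> d_{m+k-1}.$$

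Next I would compare each $d_j$ with $d_{m+k-1}$ via the explicit ratio
$$\frac{d_j}{a_j}=\frac{2j+2-2m}{j+1}.$$
This shows $d_j$ grows roughly linearly in $j-m$ near the middle, with $d_m$ small but positive. Summing, the left side is roughly $(t-1)\tfrac{k(k-1)}{2}$ units against roughly $k$ units on the right. So it suffices to prove a clean bound such as $d_j\ge c_j\,d_{m+k-1}$ with $(t-1)\sum_j c_j>1$. Here the hypothesis $k(t-1)\ge t+1$, together with $m\ge k$ (which keeps the ratios $a_{j+1}/a_j=(2m-1-j)/(j+1)$ under control), should close the gap.

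If the termwise comparison is too lossy for small $m$ (for example $m=k$), I would treat $m=k$ directly with exact binomials and use the uniform estimate only for $m>k$. If a boundary case still fails, I would fall back on modifying $\mathcal{A}$ slightly, for example by intersecting it with the star on $T$ minus one element.
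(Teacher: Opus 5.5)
\textbf{Steps 1--2 are correct, but Step 3 is not proved.} Your construction is exactly the paper's (your $R$ is its $Y$), and Steps 1 and 2 match its verification. Your closed form
\[
(t-1)\bigl(a_{m-1}-a_{m+k-1}\bigr)-\bigl(a_{m+k-1}-a_{m+k}\bigr)
\]
is correct; it equals $C_m$ times the paper's expression (\ref{eq8}). The gap is Step 3, the only nontrivial part of the theorem, which you leave as a plan (``should close the gap'', a case split, a fallback construction).

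\textbf{Why your heuristic cannot decide it.} The hypothesis $k(t-1)\ge t+1$ is equivalent to $(k-1)(t-1)\ge 2$. So in the boundary cases $(k,t)=(2,3)$ and $(3,2)$, your leading-order comparison ``$(t-1)\frac{k(k-1)}{2}$ versus $k$'' is an exact tie. Indeed, $d_{m+i-1}/d_{m+k-1}\to i/k$ as $m\to\infty$. Hence any bound $d_j\ge c_j d_{m+k-1}$ with constants independent of $m$ has $(t-1)\sum c_j\le (t-1)(k-1)/2$, which equals $1$ in those cases, and your criterion $(t-1)\sum c_j>1$ cannot be met. The danger is at large $m$, not at $m=k$. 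Your fallback of modifying $\mathcal{A}$ is a sign the inequality was not actually in hand.

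\textbf{How to close it.} Keep strictness termwise instead of in the constants. From your ratio formula, for $m\le j\le m+k-2$,
\[
\frac{d_j}{d_{m+k-1}}=\frac{a_j}{a_{m+k-1}}\cdot\frac{(j+1-m)(m+k)}{k(j+1)}>\frac{j+1-m}{k}.
\]
This holds because $a_j\ge a_{m+k-1}$ (the $a_j$ decrease for $j\ge m$, and $m+k-1\le 2m-1$) and $m+k>j+1$. Summing over the $k-1\ge 1$ values of $j$ gives
\[
(t-1)\sum_{j=m}^{m+k-2}d_j>\frac{(t-1)(k-1)}{2}\,d_{m+k-1}\ge d_{m+k-1}.
\]
This is valid for every $m\ge k$, including $m=k$ where $a_{m+k}=0$. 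No small-$m$ case analysis and no change to $\mathcal{A}$ is needed.

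\textbf{Comparison with the paper.} With this inserted, your telescoping route is a slightly more elementary finish than the paper's. The paper normalizes by $C_m$, rewrites everything through $R_{m,k}$, and uses $R_{m,k}^{-1}-1=\prod_{i<k}\bigl(1+\frac{2i+1}{m-i}\bigr)-1>\frac{k^2}{m}$. Both arguments must extract strictness from lower-order terms, because the leading terms tie on the boundary.
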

	
	For each fixed pair $(t,k)$ satisfying the hypotheses of Theorem \ref{thm1}, the parameter $m$ may be chosen arbitrarily large. Hence the theorem gives counterexamples for arbitrarily large $n$. Moreover, for these examples we have $n+t=2t+2m+1$, which is always odd.  Consequently, Theorem \ref{thm1} disproves Conjecture \ref{conj:oddcase} of Balogh, Linz, and Patk\'os.
	
	The counterexamples arise from the observation that the candidate family $\mathcal{B}_T(t,k)$ implicitly assumes that, on the critical layer $q=(n+t-1)/2$, the optimal $t$-intersecting subfamily is the fixed $t$-star $\mathcal {S}_T$. However, the relevant quantity is not merely the size of this layer, but rather the contribution $|\mathcal G|-|\nabla_{q+k}(\mathcal G)|$. Thus, one should compare $t$-intersecting $q$-uniform families according to the gain they provide on the $q$-layer minus the cost of the $(q+k)$-shade that must be deleted in order to destroy long chains. 
	
	\section{Proof of the Main Theorem}
	
	\begin{proof}[Proof of Theorem \ref{thm1}]
		We construct the desired family explicitly.  
		Partition the set into two disjoint parts
		$$[n]=X\cup Y,\qquad |X|=2m-1,\qquad |Y|=t+2.$$
		Define a $q$-uniform family
		\begin{equation*}
			\mathcal{G}=\left\{G\in\binom{[n]}{q}: |G\cap Y|\ge t+1\right\}.
		\end{equation*}
		Now, we let
		\begin{equation}\label{eq3}
			\mathcal{F}=\mathcal{G}
			\cup\bigcup_{i=1}^{k-1}\binom{[n]}{q+i}
			\cup\left(\binom{[n]}{q+k}\setminus \nabla_{q+k}(\mathcal{G})\right).
		\end{equation}
		We first verify that this family has the required structural properties.
		
		\begin{claim}
			The family $\mathcal{F}$ is $t$-intersecting.
		\end{claim}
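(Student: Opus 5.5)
The claim is that $\mathcal{F}$ is $t$-intersecting. Every member of $\mathcal{F}$ has size either $q$ (the members of $\mathcal{G}$) or at least $q+1$. The plan is to handle pairs by the sizes of the two sets, using $|A\cap B|\ge |A|+|B|-n$ for most pairs and the structure of $\mathcal{G}$ on $Y$ for the one remaining case.

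\textbf{Two sets of size at least $q+1$.} Recall $q=(n+t-1)/2$, so $2q=n+t-1$. For $A,B\in\mathcal{F}$ with $|A|,|B|\ge q+1$,
$$|A\cap B|\ge |A|+|B|-n\ge 2q+2-n=t+1\ge t.$$

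\textbf{One set in $\mathcal{G}$, the other of size at least $q+1$.} If $G\in\mathcal{G}$ and $|B|\ge q+1$, then
$$|G\cap B|\ge q+(q+1)-n=t.$$
This includes the case $B\in\binom{[n]}{q+k}\setminus\nabla_{q+k}(\mathcal{G})$; the deletion of the shade plays no role in the intersection property.

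\textbf{Both sets in $\mathcal{G}$.} This is the only case that uses the definition of $\mathcal{G}$, and I expect it to be the only real content. The crude bound gives only $|G\cap G'|\ge 2q-n=t-1$, so it is not enough. Instead I work inside $Y$. Since $|Y|=t+2$ and $|G\cap Y|,|G'\cap Y|\ge t+1$,
$$|G\cap G'|\ge |G\cap G'\cap Y|\ge (t+1)+(t+1)-(t+2)=t.$$

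Hence every pair of members of $\mathcal{F}$ meets in at least $t$ elements, and $\mathcal{F}$ is $t$-intersecting. The hypotheses $m\ge k$ and $k(t-1)\ge t+1$ are not needed here; presumably they enter only later, in the $k$-Sperner check and the size comparison.
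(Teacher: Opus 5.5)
Your proof is correct and follows the paper's argument. The paper uses the same three-case split by layer: the bound $|A\cap B|\ge |A|+|B|-n$ gives $t+1$ and $t$ for pairs involving a set of size at least $q+1$, and counting inside $Y$ gives $2(t+1)-(t+2)=t$ for two members of $\mathcal{G}$.
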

		
		\begin{proof}
			Let $G_1,G_2\in\mathcal{G}$.  Since each of $G_1$ and $G_2$ contains at least $t+1$ elements of $Y$, while $|Y|=t+2$, we have
			$$|G_1\cap G_2\cap Y|\ge 2(t+1)-(t+2)=t.$$
			Thus $|G_1\cap G_2|\ge t$.
			
			Next let $G\in\mathcal{G}$ and let $A\subseteq[n]$ satisfy $|A|\ge q+1$.  Since $q=(n+t-1)/2=t+m$ and $n=t+2m+1$, we obtain
			$$|G\cap A|\ge |G|+|A|-n\ge q+(q+1)-n=t.$$
			
			Finally, if $A,B\subseteq[n]$ both have size at least $q+1$, then
			$$|A\cap B|\ge 2(q+1)-n=t+1\ge t.$$
			
			Every member of $\mathcal{F}$ either lies in $\mathcal{G}$ or has size at least $q+1$.  Hence any two members of $\mathcal{F}$ intersect in at least $t$ elements.
		\end{proof}
		
		\begin{claim}
			The family $\mathcal{F}$ is $k$-Sperner.
		\end{claim}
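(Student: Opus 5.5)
Suppose, for contradiction, that $\mathcal{F}$ contains a chain $A_0\subset A_1\subset\cdots\subset A_k$ of length $k+1$. The plan is to locate its members within the layers used in (\ref{eq3}) and reach a contradiction with the removal of $\nabla_{q+k}(\mathcal{G})$.

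By construction, every member of $\mathcal{F}$ has size in $\{q,q+1,\dots,q+k\}$. These are $k+1$ distinct sizes. The sets in a chain have strictly increasing sizes, so a chain of $k+1$ members must use each size exactly once. Hence $|A_i|=q+i$ for every $i$.

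In particular, $A_0$ is a member of $\mathcal{F}$ of size $q$, so $A_0\in\mathcal{G}$. Also $A_k$ is a member of $\mathcal{F}$ of size $q+k$, so $A_k\in\binom{[n]}{q+k}\setminus\nabla_{q+k}(\mathcal{G})$. But $A_0\subseteq A_k$ with $A_0\in\mathcal{G}$, so by the definition of the upper shadow, $A_k\in\nabla_{q+k}(\mathcal{G})$. This is a contradiction, so $\mathcal{F}$ contains no chain of length $k+1$ and is $k$-Sperner.

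There is no real obstacle here. The only point that needs care is the pigeonhole step: it uses that $\mathcal{F}$ occupies exactly $k+1$ consecutive layers $q,\dots,q+k$. This is immediate from (\ref{eq3}), since $\mathcal{G}\subseteq\binom{[n]}{q}$.
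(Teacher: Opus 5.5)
Your proposal is correct and follows essentially the same argument as the paper. Both proofs force $|A_i|=q+i$ by counting the $k+1$ available layers, then note that $A_0\in\mathcal{G}$ and $A_0\subset A_k$ would put $A_k$ in the removed shade $\nabla_{q+k}(\mathcal{G})$, which is a contradiction.
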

		
		\begin{proof}
			The family $\mathcal{F}$ is supported on the $k+1$ layers $q,q+1,\dots,q+k$.
			Suppose, for a contradiction, that $\mathcal{F}$ contains a chain $A_0\subset A_1\subset\cdots\subset A_k$.
			Each inclusion is strict, so the sizes must increase by at least one at each step.  
			Since the available layers range from $q$ to $q+k$, 
			the only possible sequence of sizes is $|A_i|=q+i$ for all $0\le i\le k$.
			In particular, $A_0\in\mathcal{G}$, $|A_k|=q+k$, and $A_0\subset A_k$.  This is exactly the condition that
			$$A_k\in\nabla_{q+k}(\mathcal{G}),$$
			but all such top-layer sets were removed in the definition of $\mathcal{F}$ in \eqref{eq3}.  This contradiction shows that no chain of length $k+1$ is contained in $\mathcal{F}$.
		\end{proof}
		
		It remains to compare the size of $\mathcal{F}$ with that of the fixed-star construction $\mathcal{B}_T(t,k)$.  The full intermediate layers in \eqref{eq1} and \eqref{eq3} are identical. Therefore the difference comes only from the bottom layer and from the deleted part of the top layer:
		\begin{equation}\label{eq4}
			|\mathcal{F}|-|\mathcal{B}_T(t,k)|
			=\left(|\mathcal{G}|-|\nabla_{q+k}(\mathcal{G})|\right)
			-\left(|\mathcal{S}_T|-|\nabla_{q+k}(\mathcal{S}_T)|\right).
		\end{equation}
		
		We now compute the two bracketed terms. For the fixed-star construction, using $n-t=2m+1$, $q-t=m$, and $q+k-t=m+k$, we have
		$$|\mathcal{S}_T|=\binom{n-t}{q-t}=\binom{2m+1}{m}$$
		and
		$$|\nabla_{q+k}(\mathcal{S}_T)|=\binom{n-t}{q+k-t}=\binom{2m+1}{m+k}.$$
		Hence
		\begin{equation}\label{eq5}
			|\mathcal{S}_T|-|\nabla_{q+k}(\mathcal{S}_T)|
			=\binom{2m+1}{m}-\binom{2m+1}{m+k}.
		\end{equation}
		
		For $\mathcal{G}$, a set $G\in\mathcal{G}$ contains either $t+1$ or $t+2$ elements of $Y$.  Since $m\ge k\ge 2$, this gives
		\begin{equation}\label{eq6}
			|\mathcal{G}|=(t+2)\binom{2m-1}{m-1}+\binom{2m-1}{m-2}.
		\end{equation}
		
		We next determine the top-layer shadow of $\mathcal{G}$.  A set $A\in\binom{[n]}{q+k}$ contains a member of $\mathcal{G}$ if and only if $|A\cap Y|\ge t+1$.  The forward implication is immediate from the definition of $\mathcal{G}$.  Conversely, suppose $|A\cap Y|\ge t+1$.  If $|A\cap Y|=t+1$, then
		$$|A\cap X|=q+k-(t+1)=m+k-1\ge m-1,$$
		so $A$ contains a $q$-set with exactly $t+1$ elements in $Y$.  If $|A\cap Y|=t+2$, then
		$$|A\cap X|=q+k-(t+2)=m+k-2\ge m-2,$$
		so $A$ contains a $q$-set with all $t+2$ elements of $Y$.  In either case, $A$ contains a member of $\mathcal{G}$.  Therefore
		\begin{equation}\label{eq7}
			|\nabla_{q+k}(\mathcal{G})|=(t+2)\binom{2m-1}{m+k-1}+\binom{2m-1}{m+k-2}.
		\end{equation}
		
		For convenience, we write
		$$C_m=\binom{2m-1}{m-1},\qquad R_{m,k}=\frac{(m!)^2}{(m-k)!(m+k)!}.$$
		
		For completeness, we record the binomial-ratio identities used in the simplification. After dividing by $C_m$, we deduce that
		\[\begin{aligned}
			\frac{\binom{2m-1}{m-2}}{C_m}&=\frac{m-1}{m+1},\\
			\frac{\binom{2m-1}{m+k-1}}{C_m}&=\frac{(m-1)! m!}{(m-k)!(m+k-1)!}=R_{m,k}\frac{m+k}{m},\\
			\frac{\binom{2m-1}{m+k-2}}{C_m}&=\frac{(m-1)! m!}{(m-k+1)!(m+k-2)!}=R_{m,k}\frac{(m+k)(m+k-1)}{m(m-k+1)},\\
			\frac{\binom{2m+1}{m}}{C_m}&=\frac{2(2m+1)}{m+1},\\
			\frac{\binom{2m+1}{m+k}}{C_m}&=\frac{2(2m+1)(m!)^2}{(m-k+1)!(m+k)!}=R_{m,k}\frac{2(2m+1)}{m-k+1}.
		\end{aligned}\]
		It follows from \eqref{eq4}, \eqref{eq5}, \eqref{eq6}, and \eqref{eq7} that
		\[\begin{aligned}
			\frac{|\mathcal{F}|-|\mathcal{B}_T(t,k)|}{C_m}
			={}&(t+2)+\frac{m-1}{m+1}
			-(t+2)R_{m,k}\frac{m+k}{m}  \\
			&-R_{m,k}\frac{(m+k)(m+k-1)}{m(m-k+1)}
			-\frac{2(2m+1)}{m+1}
			+R_{m,k}\frac{2(2m+1)}{m-k+1}.
		\end{aligned}\]
		The terms not involving $R_{m,k}$ simplify as
		$$(t+2)+\frac{m-1}{m+1}-\frac{2(2m+1)}{m+1}=t-1.$$
		For the remaining coefficient of $R_{m,k}$, after taking the common denominator
		$m(m-k+1)$, we obtain
		\[\begin{aligned}
			&-(t+2)\frac{m+k}{m}-\frac{(m+k)(m+k-1)}{m(m-k+1)}+\frac{2(2m+1)}{m-k+1} \\
			&=\frac{-(t+2)(m+k)(m-k+1)-(m+k)(m+k-1)+2m(2m+1)}{m(m-k+1)}  \\
			&=-\frac{\bigl((t-1)m+(t+1)k\bigr)(m-k+1)}{m(m-k+1)} \\
			&=-(t-1)-\frac{k(t+1)}{m}.
		\end{aligned}\]
		Consequently, we get
		\begin{align}\label{eq8}
			\frac{|\mathcal{F}|-|\mathcal{B}_T(t,k)|}{C_m}
			&=t-1+R_{m,k}\left(-(t-1)-\frac{k(t+1)}{m}\right) \nonumber\\
			&=(t-1)(1-R_{m,k})-R_{m,k}\frac{k(t+1)}{m} \nonumber\\
			&=R_{m,k}\left((t-1)(R_{m,k}^{-1}-1)-\frac{k(t+1)}{m}\right).
		\end{align}

		It remains to show that the right-hand side of \eqref{eq8} is positive.  Since $m\ge k$, all factors below are well-defined and positive.  
		We have
		$$R_{m,k}^{-1}
		=\frac{(m-k)!(m+k)!}{(m!)^2}
		=\frac{(m+k)\cdots(m+1)}{(m-k+1)\cdots (m)}
		=\prod_{i=0}^{k-1}\frac{m+i+1}{m-i}
		=\prod_{i=0}^{k-1}\left(1+\frac{2i+1}{m-i}\right).$$
		Because $k\ge 2$, expanding the product yields the strict inequality
		$$R_{m,k}^{-1}-1
		>\sum_{i=0}^{k-1}\frac{2i+1}{m-i}
		\ge\sum_{i=0}^{k-1}\frac{2i+1}{m}
		=\frac{k^2}{m}.$$
		Using the hypothesis $k(t-1)\ge t+1$, we obtain
		$$\begin{aligned}
			(t-1)(R_{m,k}^{-1}-1)-\frac{k(t+1)}{m}
			>\frac{(t-1)k^2-k(t+1)}{m}  =\frac{k\bigl(k(t-1)-(t+1)\bigr)}{m}
			\ge 0.
		\end{aligned}$$
		Thus the bracket in \eqref{eq8} is strictly positive.  
		Since $C_m>0$ and $R_{m,k}>0$, it follows that
		$$|\mathcal{F}|-|\mathcal{B}_T(t,k)|>0.$$
		This completes the proof of Theorem \ref{thm1}.
	\end{proof}

	\section*{Acknowledgement}
	\noindent This research is supported by National Key R\&D Program of China under grant number 2024YFA1013900, NSFC under grant number 12471327,  JSPS KAKENHI Grant Number 25KF0036, the NSF of Hubei Province Grant Number 2025AFB309,  the China Postdoctoral Science Foundation Grant Number 2025M773113, the Fundamental Research Funds for the Central Universities, Central China Normal University Grant Number CCNU24XJ026.
	
	\section*{Declaration}
	
	\noindent$\textbf{Conflict~of~interest}$
	The authors declare that they have no known competing financial interests or personal relationships that could have appeared to influence the work reported in this paper.
	
	\noindent$\textbf{Data~availability}$
	No data was used for the research described in the article.

\end{document}